\documentclass[11pt]{article}

\usepackage[T1]{fontenc}

\usepackage{geometry}
\usepackage{setspace}
\usepackage{times}
\usepackage{graphicx}
\graphicspath{{Figures/}}
\usepackage{bm}
\usepackage{amssymb,amsmath,amsthm}
\usepackage{booktabs}
\usepackage{todonotes}
\usepackage{bbm}
\usepackage{caption}
\usepackage{multirow}
\usepackage{natbib}
\usepackage{authblk}
\usepackage{hyperref}

\renewcommand{\P}{\mathbb{P}}

\newcommand{\as}{\mbox{a.s.}}

\newcommand{\Ind}[1]{I_{\{#1\}}}

 \newcommand{\rd}{\mathrm{d}}
 
 \newcommand{\indep}{\perp \!\!\! \perp}
\renewcommand{\leq}{\leqslant}

\renewcommand{\le}{\leqslant}

\newcommand{\asequal}{\ensuremath{\stackrel{\footnotesize{\as}}{=}}}

\newcommand{\udp}{T}

\newcommand{\udponAl}{\udp\vert_{A_\ell}}
\newcommand{\wtfunc}{\omega}
\newcommand{\unionA}{A^{\scriptscriptstyle \cup}}

\newtheorem{proposition}{Proposition}
\newtheorem{theorem}{Theorem}

\newtheorem{corollary}{Corollary}
\newtheorem{definition}{Definition}
\newtheorem{remark}{Remark}

\begin{document}

\newcommand{\myThanks}{\thanks{Address correspondence to
     Alexander J.~McNeil, The School for Business and Society,
     University of York, Heslington, York YO10 5DD, UK, 
     \texttt{alexander.mcneil@york.ac.uk}.}}

\title{Stochastic Inversion of Multivariate Uniform-Distribution-Preserving Transformations}

\author{ALEXANDER J.\ MCNEIL\myThanks}
\affil{The School for Business and Society, University of York, UK}

\author{JOHANNA G.\ NE\v{S}LEHOV\'A}
\affil{Department of Mathematics and Statistics, McGill University,
  Montr\'eal, Canada}

\maketitle

\begin{abstract}
  A multivariate transformation of the unit cube with component
  transformations that are piecewise continuously differentiable and
  uniform distribution preserving (udp) is considered. A stochastic inverse transformation
is defined using randomization to overcome the non-injective nature
of the udp
transformations. The inverse transformation
preserves the uniform
margins of a random vector distributed according to a copula and
yields different copulas for different randomizations.
A copula density transformation result for the multivariate stochastic
inverse is proved and illustrated in the bivariate case.
\end{abstract}

\noindent {\it Keywords}\/: copula, uniform-distribution-preserving
transformation, stochastic inverse

\section{Introduction}\label{sec:introduction}

Let
$T_1,\ldots,T_d$ be uniform distribution preserving (or
Lebesgue measure preserving)
transformations on the unit interval $[0,1]$, meaning that $T_j(U)\sim \mathcal{U}(0,1)$ whenever $U\sim\mathcal{U}(0,1)$, and consider the
mapping of the $d$-dimensional unit cube given by
$$
\mathcal{T}:[0,1]^d \to [0,1]^d, \quad
\bm{u}=(u_1,\ldots,u_d)^\top \mapsto
\mathcal{T}(\bm{u})=(T_1(u_1),\ldots,T_d(u_d))^\top.
$$
When the mapping $\mathcal{T}$ is applied to a random vector
$\bm{U}=(U_1,\ldots,U_d)^\top$ with uniform margins we obtain a
random vector $\mathcal{T}(\bm{U})$ with uniform
margins, so that $\mathcal{T}$ induces a mapping between sets
of copulas, i.e.~multivariate distribution functions with uniform
margins. \cite{quessy:2024} and \cite{hofert/pang:2025} investigate
the copula families obtained from
$\mathcal{T}(\bm{U})$ when $\bm{U}$ has a given copula $C_{\bm{U}}$, for certain sub-classes of transformation
$\mathcal{T}$. In this article we consider the inverse
construction, where we infer models for $\bm{U}$ from models for
$\bm{V} = \mathcal{T}(\bm{U})$. Since the
component transformations $T_j$ may be non-injective, the
inverse construction involves a randomization and is referred to as multivariate
stochastic inversion. Our main result is to derive the possible joint density
functions for $\bm{U}$ when the joint density $c_{\bm{V}}$ of $\bm{V}$ is
given and the component
transformations of $\mathcal{T}$ are piecewise
continuously differentiable on finite partitions
of $[0,1]$.


Our interest in multivariate stochastic inversion is
motivated by the problem of constructing
parametric copula models for non-monotonic dependencies. While there
are many parametric copula families describing monotonic relationships
between variables, models for non-monotonic dependencies are much less
plentiful.  Consider two
continuous variables $X$ and $Y$ with distribution functions (dfs)
  $F_X$ and $F_Y$. According to Sklar's representation theorem (see, for
  example,~\cite{bib:joe-15}), the dependence between $X$ and $Y$
  can be described by their unique copula, the df $C_{\bm{U}}$ of $(U_1,U_2) = (F_X(X), F_Y(Y))$.
Suppose that the dependence between $X$ and $Y$ is characterized by strong positive dependence between $g(X)$ and $h(Y)$ for two piecewise continuous and strictly monotonic
functions $g$ and $h$ with a finite number of turning points. For
example, Figure~\ref{fig:motivation} shows data from a model
where $X^2$ and $Y^2$ are strongly positively dependent although $X$ and
$Y$ are uncorrelated. The corresponding copula data $(U_1,U_2)$ show a
cruciform pattern, not compatible with the most common parametric
copula models, and we suppose our aim is to find a copula $C_{\bm{U}}$
for these data.

In such a situation, the copula $C_{\bm{V}}$ of $(g(X),h(Y))$ should capture
 a monotonic dependency. It may be shown that the copula $C_{\bm{V}}$ is the joint distribution
 of $(V_1,V_2) = (T_1(U_1),T_2(U_2))$
for two uniform-distribution-preserving
(udp) functions $T_1$ and $T_2$.
Assuming $F_X$ is strictly increasing with inverse $F_X^{-1}$, the
  function $T_1$ is given by $T_1(u) = F_{g(X)} \circ g \circ
F_X^{-1}(u)$ where $F_{g(X)}$  is the df of $g(X)$, and similarly for
$T_2$. For the example of Figure~\ref{fig:motivation}, the
udp functions are $T_1(u) = T_2(u) = |2u-1|$ and the copula data
$(V_1,V_2)$ can be seen to show a positive dependence pattern
consistent with a number of common copulas.
After estimating an appropriate copula $C_{\bm{V}}$, the multivariate stochastic
inversion result in this article can be used to construct 
a possible model $C_{\bm{U}}$ for $(U_1,U_2)$
from $C_{\bm{V}}$. The identification of $T_1$ and $T_2$ presupposes knowledge of the
functions $g$ and $h$ inducing strong monotonic dependence
between $X$ and $Y$. When $g$ and $h$ are not clear a priori, a procedure
proposed in~\citet{bib:mcneil-neslehova-smith-25} may be used to
determine $T_1$ and $T_2$.


  
\begin{figure}[t]
   \centering
  \includegraphics[width=16.5cm,height=4cm]{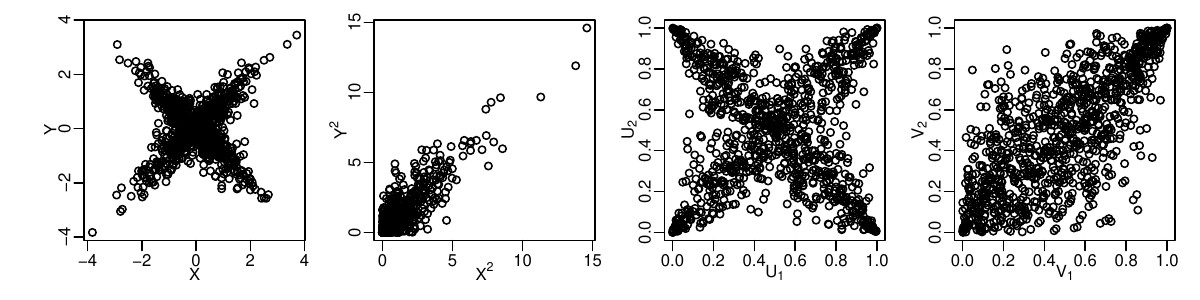}
  \vspace{-0.5cm}
   \caption{\label{fig:motivation}  1000 realizations of
     standard normal variables $X$ and $Y$ in a model where
     $X^2$ and $Y^2$ are strongly positive dependent. The values of
     $(U_1,U_2)$ and $(V_1,V_2)$ are shown in the third and fourth
     pictures. These data are simulated using a Gumbel (2.5) copula
     for $(X^2,Y^2)$.}
 \end{figure}


Many of the basic results for udp functions can be found
in~\cite{bib:porubsky-salat-strauch-88}. \cite{bib:mcneil-20}
considers the special case of v-transforms, which generalize the
function $T(u) = |2u-1|$ referred to above.
The concept of the univariate stochastic inverse is developed in~\cite{bib:mcneil-20}
and~\cite{bib:bladt-mcneil-21} for v-transforms and used to construct
models for
financial time series. 
\cite{bib:mcneil-neslehova-smith-25} show how stochastic
inversion can be applied to general udp transformations and consider
the multivariate generalization. Their multivariate stochastic inverse
involves the use of uniform randomizer variables $Z_1,\ldots,Z_d$ to randomly
select possible inverse values for each udp
transformation. They derive a density
transformation result under the assumption that the randomizers are iid
variables independent of $\bm{V}$. In this article we relax this
assumption and show that a much larger set of copulas can be obtained.

\section{Stochastic Inversion}\label{sec:stochastic-inversion}


\subsection{Regular udp functions and their stochastic inverses}\label{sec:regul-udp-funct}
\begin{definition}\label{def:udp-regular}
A transformation $\udp : [0,1] \to [0,1]$ is called {\em uniform distribution preserving (udp)} if $T(U) \sim \mathcal{U}(0,1)$ for $U \sim \mathcal{U}(0,1)$.
A udp transformation is called {\em regular} if there exists a finite partition $a_0 = 0 < a_1 < \dots < a_{L-1} < a_L = 1$ for $L \in \{1, 2, \dots\}$ such that $\udp$ is continuously differentiable on $A_\ell =(a_{\ell-1}, a_{\ell})$ for all $\ell \in \{1,\dots, L\}$. 
\end{definition}

\noindent Let $\udponAl$ denote the restriction of $T$ to
$A_\ell$, let
$R_\ell = T(A_\ell) = \udponAl(A_\ell)$ denote the image of $A_\ell$
under $\udp$, and let $\unionA = \cup_{i=1}^L A_\ell$ denote the
union of open intervals on which $\udp$ is continuously
differentiable. Following \cite{bib:mcneil-neslehova-smith-25}, both $\unionA$ and $T(\unionA)$
have Lebesgue measure one.

As an example, Figure~\ref{fig:Tplots} shows the regular udp functions $T_j^\Lambda = F_{\Lambda_j(U)} \circ \Lambda_j$, where $U \sim
\mathcal{U}(0,1)$ and the $\Lambda_j$ are shifted
Legendre polynomials on $[0,1]$ with degree $j =
2,\ldots,6$. The partitions on which the functions are
continuously differentiable have 2, 5, 6, 11 and 12 pieces, respectively.

 \begin{figure}[htb]
    \centering
     \includegraphics[width=16cm,height=4cm]{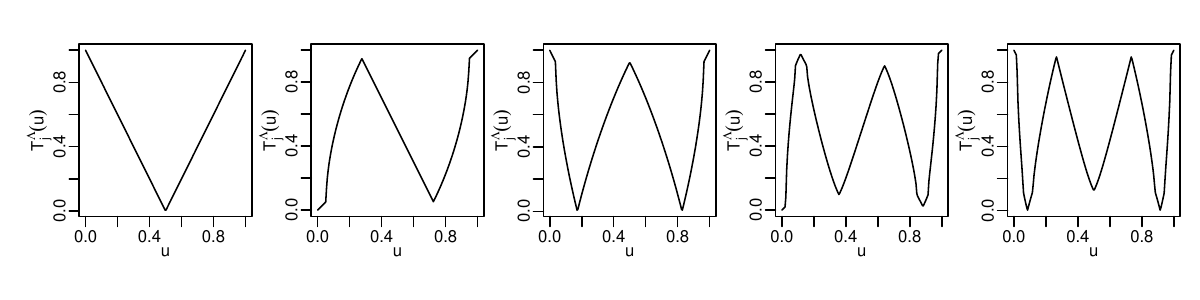}\vspace{-0.5cm}
   \caption{\label{fig:Tplots}  Plots of regular udp functions $T_j^\Lambda = F_{\Lambda_j(U)} \circ \Lambda_j$, where $U \sim
\mathcal{U}(0,1)$ and the $\Lambda_j$ are shifted
Legendre polynomials on $[0,1]$ with degree $j =2,
\ldots,6$. Note that $T_2^\Lambda(u) = |2u-1|$  is
a v-transform.}
 \end{figure}

 \begin{definition}[Stochastic inverse of a regular udp transformation $\udp$]\label{def:si}
Let $\udp$ be a regular udp transformation and $Z \sim \mathcal{U}(0,1)$.
Let $\udp^{-1}(\{v\}) = \{u : \udp(u) = v, u \in \unionA\} = \{r_1(v),
\ldots, r_{n(v)}(v)\}$ be the pre-image of the point $v$ intersected with $\unionA$. 
For $v \in \udp(\unionA)$, assign the inverse value 
$
\udp^\leftarrow(v,Z) = G^{-1}(Z,v)
$
where $G^{-1}(\cdot,v)$ is the generalized inverse of the distribution function $G(\cdot,v)$ of the discrete random variable with support $\{r_1(v),
\ldots, r_{n(v)}(v)\}$ and probabilities $1/|\udp^\prime(r_i(v))|$, $i
\in \{1,\dots, n(v)\}$.  Otherwise, for $v \not \in \udp(\unionA)$, set
$\udp^\leftarrow(v,Z)$ to be an arbitrary value from $[0,1]$, say $0$ without loss of generality.
\end{definition}

The essential idea is that, on
all but the null set $\{v : v \not\in \udp(\unionA)\}$, we construct the inverse by finding the
finite set of
roots $\{r_1(v),
\ldots, r_{n(v)}(v)\}$ of the equation $\udp(u) = v$ and selecting one
of them by multinomial sampling according to a set of probabilities
$\{|\udp^\prime(r_1(v))|^{-1}, \ldots, |\udp^\prime(r_{n(v)}(v))|^{-1}\}$. From~\citet[Lemma~S1 and Proposition~S4(v)]{bib:mcneil-neslehova-smith-25}, these probabilities are indeed well-defined and sum to one.
Clearly, $\udp(\udp^\leftarrow(v,Z))
\asequal v$. The key property
of the stochastic inverse, as shown in~\cite{bib:mcneil-neslehova-smith-25}, is that it preserves uniformity in the
following sense.

\begin{theorem}\label{theorem:stochinverse}
If $\udp$ is a regular udp and $V\sim \mathcal{U}(0,1)$, $Z \sim \mathcal{U}(0,1)$ are independent,
 $\udp^\leftarrow(V,Z) \sim \mathcal{U}(0,1)$.
\end{theorem}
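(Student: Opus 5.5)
We prove $U = \udp^\leftarrow(V,Z)\sim\mathcal{U}(0,1)$ by showing that $\P(U\in B)=\lambda(B)$ for every Borel set $B\subseteq[0,1]$. Since $\unionA$ has Lebesgue measure one, it suffices to take $B\subseteq A_\ell$ for a single $\ell$, and then add over the pieces. The event $V\notin\udp(\unionA)$ is null and can be ignored. Conditionally on $V=v$ with $v\in\udp(\unionA)$, the variable $Z$ is still uniform because it is independent of $V$. Hence $U$ is the root $r_i(v)$ with probability $p_i(v)=1/|\udp'(r_i(v))|$. This gives
\[
\P(U\in B)=\int_0^1 \sum_{i=1}^{n(v)} \frac{\Ind{r_i(v)\in B}}{|\udp'(r_i(v))|}\,\rd v .
\]
Every root $r_i(v)$ lies in exactly one piece $A_m$, and within a piece the roots of $\udp(u)=v$ are the points of $(\udponAl)^{-1}(\{v\})$. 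For $B\subseteq A_\ell$, only the roots lying in $A_\ell$ contribute, so
\[
\P(U\in B)=\int_{R_\ell}\sum_{u\in A_\ell\cap B,\ \udp(u)=v}\frac{1}{|\udp'(u)|}\,\rd v .
\]

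This is exactly the area (coarea) formula for the $C^1$ map $\udponAl$ on the open interval $A_\ell$. For a nonnegative measurable $g$,
\[
\int_{A_\ell} g(u)\,|\udp'(u)|\,\rd u=\int \sum_{u\in A_\ell,\ \udp(u)=v} g(u)\,\rd v .
\]
We apply it with $g(u)=\Ind{u\in B}/|\udp'(u)|$ on the set where $\udp'\neq0$, which gives $\P(U\in B)=\lambda(B\cap\{\udp'\neq 0\})$.

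It remains to show that the critical set $\{u\in A_\ell:\udp'(u)=0\}$ is Lebesgue null. This is where the udp property is needed; it is not automatic for $C^1$ maps. By Sard's theorem its image is null. Since $\udp$ is udp, $\lambda(\udp^{-1}(N))=\lambda(N)=0$ for any null set $N$, so the critical set, which sits inside the preimage of its own image, is null as well. The same argument justifies restricting to $v\in\udp(\unionA)$ with all $\udp'(r_i(v))\neq0$ and $n(v)<\infty$. These are precisely the facts that make $p_i(v)$ well defined, which the paper attributes to \citet[Lemma~S1 and Proposition~S4(v)]{bib:mcneil-neslehova-smith-25}.

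Summing over $\ell$ then gives $\P(U\in B)=\lambda(B)$ for all Borel $B$. We expect the main obstacle to be the measure-theoretic care in applying the area formula. There are two points to handle. First, $\udponAl$ need not be monotone on $A_\ell$, so we cannot use a simple change of variables; instead we decompose $\{u\in A_\ell:\udp'(u)\neq0\}$ into countably many open intervals on which $\udp$ is strictly monotone, and apply the one-dimensional substitution rule on each. Second, we must confirm that summing over the countably many intervals reproduces the finite root sum. This holds because a.e. $v$ has finitely many preimages, so the interchange of sum and integral is justified by monotone convergence.
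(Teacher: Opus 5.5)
Your argument is correct, but it uses heavier tools than the paper, and the two obstacles you anticipate do not arise. The paper does not prove Theorem~\ref{theorem:stochinverse} itself; it cites \citet{bib:mcneil-neslehova-smith-25}. However, its formula $p_\ell(v)=\Ind{v\in R_\ell}/|\udp'(\udponAl^{-1}(v))|$ in \eqref{eq:15} and the change of variables in the proof of Theorem~\ref{theorem:mult-stoch-invers} both rest on each restriction $\udponAl$ being strictly monotone. With that fact the proof is one substitution per piece. For Borel $B\subseteq A_\ell$ one has $\P(U\in B)=\int_{R_\ell}\Ind{\udponAl^{-1}(v)\in B}\,p_\ell(v)\,\rd v$. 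Putting $v=\udp(x)$ and using $p_\ell(\udp(x))=|\udp'(x)|^{-1}$ turns this into $\int_{A_\ell}\Ind{x\in B}\,\rd x$, which is the $d=1$ instance of the computation leading to \eqref{eq:lemma}, with $\wtfunc\equiv 1$.

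Monotonicity is not an extra hypothesis: the sum-to-one property you already import forces $|\udp'|\geq 1$ on $\unionA$. Suppose $|\udp'(u_0)|<1$ for some $u_0\in A_\ell$. Then $|\udp'|<1$ on an open interval $N\ni u_0$. $\udp$ is not constant on $N$, since otherwise $\udp(U)$ would have an atom, so $\udp(N)$ has positive length. Almost every $v$ in $\udp(N)$ then has a root of weight $1/|\udp'|>1$, contradicting $\sum_i|\udp'(r_i(v))|^{-1}=1$. Hence $\udp'$ has constant sign on each $A_\ell$. The critical set is therefore empty, Sard is not needed, your countable decomposition is just $A_\ell$ itself, and $n(v)\leq L$. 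Note that Sard plus measure preservation alone would not give $n(v)<\infty$, which you claim follows from ``the same argument'': for a general $C^1$ map, roots can accumulate at an endpoint of $A_\ell$. Finiteness really comes from monotonicity, or from the cited lemma. Your sum--integral interchange, on the other hand, is justified by Tonelli whether or not $n(v)$ is finite.

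What your area-formula route buys is that it needs only the well-definedness of the weights, not the structural fact that each piece is monotone. The cost is measure-theoretic machinery that is vacuous in this setting. The paper's route is elementary, and it is exactly the mechanism reused in the multivariate density theorem.
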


Random sampling from the multinomial distribution may be thought of
as random allocation of the inverse of a point $v \in T(\unionA)$ to one of the intervals
$A_\ell$,  $\ell \in \{1,\ldots,L\}$.
Writing $U =
\udp^\leftarrow(V,Z)$, the allocation probabilities are given by
\begin{equation}
  \label{eq:15}
  p_\ell(v) = \P(U \in A_\ell \mid V = v) = G(a_\ell,v) -
  G(a_{\ell-1},v) = \frac{\Ind{v \in R_\ell}}{
    |\udp^\prime(\udponAl^{-1}(v))|}.
\end{equation}
Equation~\eqref{eq:15} implies
the useful identity $p_\ell(T(x)) = \left|T^\prime(x)\right|^{-1}$ for $x \in
A_\ell$, used several times in the sequel.


 
 

\subsection{Multivariate stochastic inversion}\label{sec:mult-stoch-invers}
\cite{bib:mcneil-neslehova-smith-25} formulate the following
Corollary of Theorem~\ref{cor:stochinversemult}.
\begin{corollary}\label{cor:stochinversemult}
Let $\bm{V} = (V_1,\dots, V_d)^\top$ and $\bm{Z} = ( Z_1,\dots,
Z_d)^\top$ be vectors of possibly dependent standard uniform random variables such
that the pairs $(V_i,Z_i)$ are independent for $i=1,\ldots,d$. Let
$\udp_{1},\dots, \udp_{d}$ be regular udp transformations. Then
$\bm{U} = \left(\udp_{1}^\leftarrow(V_1,Z_1), \dots,
  \udp_{d}^\leftarrow(V_d,Z_d)\right)^\top$ is a vector of standard
uniform  variables and $\mathcal{T}(\bm{U}) = (\udp_{1}(U_1), \dots, \udp_{d}(U_d))^\top= \bm{V}$, almost surely.
\end{corollary}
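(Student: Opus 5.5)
The argument has two parts, both essentially componentwise: uniformity of each $U_i$, and the almost sure identity $\mathcal{T}(\bm{U})=\bm{V}$. Since a statement about marginal laws and a statement holding almost surely in each coordinate never require the joint law of $(\bm V,\bm Z)$ across different indices, the possible dependence between the $V_i$, between the $Z_i$, or between $V_i$ and $Z_j$ for $i\neq j$ will play no role. This is exactly why only the independence of each pair $(V_i,Z_i)$ is assumed.

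\textbf{Step 1 (uniform margins).} Fix $i\in\{1,\dots,d\}$. By hypothesis $V_i\sim\mathcal{U}(0,1)$ and $Z_i\sim\mathcal{U}(0,1)$ are independent, and $\udp_i$ is a regular udp transformation. Theorem~\ref{theorem:stochinverse} applies directly and gives $U_i=\udp_i^\leftarrow(V_i,Z_i)\sim\mathcal{U}(0,1)$. Since $i$ was arbitrary, $\bm{U}$ is a vector of standard uniform variables. One point needs checking: $U_i$ is a measurable function of $(V_i,Z_i)$. This follows because $(v,z)\mapsto G^{-1}(z,v)$ is built from the finitely many measurable inverse branches $\udponAl^{-1}$, indicator functions of the sets $R_\ell$, and the continuous derivative. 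I take this as implicit in the definition, as in the univariate theorem.

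\textbf{Step 2 ($\mathcal{T}(\bm U)=\bm V$ a.s.).} Again fix $i$. Since $V_i$ is uniform and $\udp_i(\unionA)$ has Lebesgue measure one, we have $\P(V_i\in \udp_i(\unionA))=1$. On this event, $G^{-1}(Z_i,V_i)$ takes a value in the support $\{r_1(V_i),\dots,r_{n(V_i)}(V_i)\}$ of the discrete law $G(\cdot,V_i)$. The generalized inverse of a discrete distribution function maps $(0,1)$ into its support, and $Z_i\in(0,1)$ almost surely. Each root satisfies $\udp_i(r_k(V_i))=V_i$, so $\udp_i(U_i)=V_i$ almost surely. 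Intersecting the $d$ events of probability one gives $\mathcal{T}(\bm U)=\bm V$ almost surely.

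The only potential obstacle is the measure-theoretic bookkeeping. Concretely, this means confirming that $\udp_i(\unionA)$ is measurable with full measure, which is quoted from \cite{bib:mcneil-neslehova-smith-25}, and that the probabilities $|\udp_i'(r_k(v))|^{-1}$ define a genuine distribution, i.e.\ they sum to one, as also quoted. Once these are granted, the corollary reduces to the univariate theorem applied coordinate by coordinate.
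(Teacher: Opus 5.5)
Your proposal is correct and follows the paper's route. The paper gives no separate proof: it treats the statement as an immediate corollary of Theorem~\ref{theorem:stochinverse} applied coordinate by coordinate, since only independence within each pair $(V_i,Z_i)$ is needed, together with the observation that $\udp(\udp^\leftarrow(v,Z))=v$ almost surely. Your Steps 1 and 2 spell out exactly these two points.
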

\noindent
For $i=1,\ldots,d$, denote the partition on
which $T_i$ is regular by
$a_{i0} = 0 < a_{i1} < \dots < a_{i(L_i-1)} <
a_{i,L_i} = 1$ for $L_i \in \{1,2,\ldots\}$.
 As in
Definition~\ref{def:si}, write
$G_{i}(\cdot,v)$ for the multinomial distribution function associated
with the stochastic inverse of $T_i$ at $v$.
For $\ell \in
\{1,\ldots,L_i\}$, let $A_{i \ell} = (a_{i(\ell-1)},
a_{i\ell})$ and $R_{i \ell}
= T_i(A_{i \ell})$ and set $\unionA_i =
A_{i 1} \cup \cdots \cup A_{iL_i}$.
 Let $p_{i\ell}(v) = \P(U_i \in A_{i\ell} \mid V_i = v) =
 G_i(a_{i\ell},v) - G_i(a_{i(\ell-1)},v)$
 denote conditional cell probabilities
 as in~\eqref{eq:15}. Moreover, let
 $b_{i\ell}(v) = G_i(a_{i\ell},v)$ and $B_{i \ell}(v) = (b_{i(\ell-1)}(v), b_{i\ell}(v))$.

In
multivariate stochastic inversion the points
$\bm{v} = (v_1,\ldots,v_d)^\top \in T_1(\unionA_1) \times \cdots \times
T_d(\unionA_d) \subset (0,1)^d$ are randomly allocated to an inverse value
$\bm{u} = (u_1,\ldots,u_d)^\top = (T_1^\leftarrow(v_1,Z_1),\ldots,
T_d^\leftarrow(v_d,Z_d))^\top$ in a box $A_{1 \ell_1} \times \cdots \times
A_{d \ell_d} \in (0,1)^d$  by multinomial sampling.
Analogous to~\eqref{eq:15}, let
$$
p_{\ell_1,\ldots,\ell_d}(\bm{v})= p_{\ell_1,\ldots,\ell_d}(v_1,\ldots,v_d)  = \P\left(\bm{U} \in A_{1\ell_1} \times \cdots \times
  A_{d \ell_d} \mid \bm{V} = \bm{v} \right)
$$
denote the conditional probability of allocation to the cell $A_{1\ell_1} \times \cdots \times
A_{d \ell_d}$ given $\bm{V}$, for $\ell_i \in
\{1,\ldots,L_i\}$ and $i=1,\ldots,d$.
These 
probabilities define an $L_1 \times \dots \times L_d$ contingency
table or $d$-variate multinomial distribution with marginal
multinomial distributions given by $\{p_{i\ell_i}(v_i), \ell_i =1,\ldots,L_i\}$
for $i=1,\ldots,d$. They 
depend on the conditional distribution of the
randomizers $F_{\bm{Z} \mid \bm{V}}$ according to the following result.
\begin{proposition}\label{prop:multinomialp}
  The allocation probabilities are given by
$  p_{\ell_1,\ldots,\ell_d}(\bm{v}) =  \P(\bm{Z} \in B_{1\ell_1}(v_1) \times \dots \times B_{d \ell_d}(v_d) \mid \bm{V} = \bm{v})$ if  $\bm{v} \in R_{1\ell_1}\times \cdots \times R_{d\ell_d}$ and by $0$ otherwise.
\end{proposition}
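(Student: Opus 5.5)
The plan is to reduce the joint event $\{\bm{U} \in A_{1\ell_1}\times\cdots\times A_{d\ell_d}\}$ to a joint event on the randomizers, one coordinate at a time, and then condition on $\bm{V}=\bm{v}$. Fix $\bm{v} \in T_1(\unionA_1)\times\cdots\times T_d(\unionA_d)$; the complement of this set is a null set under the joint law of $\bm{V}$, since each margin is uniform and each $T_i(\unionA_i)$ has Lebesgue measure one. So conditional probabilities there can be assigned arbitrarily.

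The first step is univariate. For fixed $v\in T_i(\unionA_i)$, $U_i = T_i^\leftarrow(v,Z_i) = G_i^{-1}(Z_i,v)$, where $G_i(\cdot,v)$ is a step distribution function with atoms at the roots $r_1(v),\ldots,r_{n(v)}(v)$. All these roots lie in $\unionA_i$, hence never at a partition point $a_{i\ell}$. I will use the standard equivalence $G^{-1}(z)\le x \iff z \le G(x)$ for the generalized inverse to show
$$
G_i^{-1}(z,v) \in A_{i\ell}=(a_{i(\ell-1)},a_{i\ell}) \iff z \in \bigl(G_i(a_{i(\ell-1)},v),\,G_i(a_{i\ell},v)\bigr] .
$$
Since $G_i(\cdot,v)$ has no atom at $a_{i\ell}$, the same holds with $<$ and $>$ exchanged freely. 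This is $B_{i\ell}(v)$ up to its endpoints, which form a null set for $Z_i$ given $V_i=v$ (or at worst are handled by the closed/open convention of the generalized inverse).

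The interval $B_{i\ell}(v)$ is nondegenerate exactly when $G_i$ puts mass in $A_{i\ell}$. That happens iff some root lies in $A_{i\ell}$, i.e.\ iff $v \in R_{i\ell}$. If $v\notin R_{i\ell}$, then $b_{i(\ell-1)}(v)=b_{i\ell}(v)$, the interval is empty, and the probability is $0$. This is consistent with \eqref{eq:15}.

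The second step is to take intersections over $i$. The event $\{\bm{U}\in \prod_i A_{i\ell_i}\}$ coincides, up to a null set, with $\{\bm{Z}\in \prod_i B_{i\ell_i}(V_i)\}$, pointwise in $\bm{V}$. Conditioning on $\bm{V}=\bm{v}$, via a regular conditional distribution of $\bm{Z}$ given $\bm{V}$ and the substitution rule for functions of $(\bm{V},\bm{Z})$, gives
$$
p_{\ell_1,\ldots,\ell_d}(\bm{v}) = \P\bigl(\bm{Z}\in B_{1\ell_1}(v_1)\times\cdots\times B_{d\ell_d}(v_d)\mid \bm{V}=\bm{v}\bigr).
$$
If some $v_i\notin R_{i\ell_i}$, the corresponding factor interval is empty, which yields $0$. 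This matches the stated dichotomy.

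The main obstacle is the endpoint issue. The conditional law of $Z_i$ given $\bm{V}$ need not be uniform or atomless, since $\bm{Z}$ may depend on $\bm{V}$ arbitrarily. The boundary points $b_{i\ell}(v)$ could therefore carry conditional mass, and the open-interval convention for $B_{i\ell}$ might disagree with the half-open set derived above. I would first try to argue that the corollary's hypothesis that $(V_i,Z_i)$ are independent makes $Z_i$ uniform given $V_i$. Then $\P(Z_i=b_{i\ell}(V_i))=0$ unconditionally, so for almost every $\bm{v}$ the conditional probability of hitting an endpoint is zero. This resolves the discrepancy almost everywhere, which is all the statement requires.
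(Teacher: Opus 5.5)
Your proposal is correct and follows essentially the same route as the paper: write $U_i = G_i^{-1}(Z_i,v_i)$, convert the event $\{G_i^{-1}(Z_i,v_i)\in A_{i\ell_i}\}$ into $\{Z_i \in (G_i(a_{i(\ell_i-1)},v_i), G_i(a_{i\ell_i},v_i))\}$ coordinatewise, condition on $\bm{V}=\bm{v}$, and note that the cell probability is zero when some $v_i\notin R_{i\ell_i}$. The one addition is your care over the half-open versus open interval endpoints, which the paper passes over silently; your almost-everywhere resolution via the independence of $(V_i,Z_i)$ is valid.
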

\begin{proof}
For a point $\bm{v} \in R_{1\ell_1}\times \cdots \times
R_{d\ell_d}$ observe that
  \begin{align*}
    p_{\ell_1,\ldots,\ell_d}(\bm{v})   
    &= \P\left(
  \udp_i^\leftarrow(V_i,Z_i) \in
      A_{i\ell_i}, i = 1,\ldots,d 
      \mid \bm{V} =
      \bm{v} \right)\\
   &=  \P\left((G_{i}^{-1}(Z_i,v_i) \in A_{i\ell_i} ,i=1,\ldots,d\mid
     \bm{V} = \bm{v} \right)    \\
    &= \P\left(Z_i\in \left(G_{i}(a_{i(\ell_i-1)},v_i),
      G_i(a_{i\ell_i},v_i)\right), i=1,\ldots,d \mid
      \bm{V} = \bm{v} \right).
  \end{align*}
The result follows as a point $\bm{v} \not\in R_{1\ell_1}\times \cdots \times
R_{d\ell_d}$ cannot be randomly allocated to $A_{1\ell_1}\times \cdots \times
A_{d\ell_d}$.
\end{proof}

We now describe the distribution of the
multivariate stochastic inverse when $\bm{V}$
has a joint density. Henceforth, we use expressions for probability densities like
$c_{\bm{V}}(v_1,\ldots,v_d)$ and $c_{\bm{V}}(\bm{v})$ interchangeably.

\begin{theorem}\label{theorem:mult-stoch-invers}
  Let $\bm{V} = (V_1,\dots, V_d)^\top$ and $\bm{Z} = ( Z_1,\dots,
Z_d)^\top$ be vectors of possibly dependent standard uniform random variables such
that the pairs $(V_i,Z_i)$ are independent for $i=1,\ldots,d$. Let
$\udp_{1},\dots, \udp_{d}$ be regular udp transformations. If $\bm{V}$ is distributed
  according to a copula with density $c_{\bm{V}}$ then
the
  multivariate stochastic inverse
  $\bm{U} = (\udp_{1}^\leftarrow(V_1,Z_1), \dots,
  \udp_{d}^\leftarrow(V_d,Z_d))^\top$ is distributed according to a
  copula with density $c_{\bm{U}}(u_1,\ldots,u_d) =
    c_{\bm{V}}(T_1(u_1),\ldots,T_d(u_d)) \wtfunc(u_1,\ldots,u_d)$ where
  \begin{equation}\label{eq:4}
    \wtfunc(u_1,\ldots,u_d) =
  \sum_{\ell_1 =1}^{L_1}\cdots  \sum_{\ell_d =1}^{L_d}\Ind{\bm{u} \in A_{1\ell_1}\times \cdots
  \times A_{d\ell_d}} p_{\ell_1,\ldots,\ell_d}(\mathcal{T}(\bm{u}))   \prod_{i=1}^d \left|
      T_i^\prime(u_i)\right|   .
\end{equation}
  \end{theorem}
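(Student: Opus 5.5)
We compute the distribution of $\bm{U}$ by conditioning on the cell of the partition into which $\bm{U}$ falls, and then applying a change of variables on each cell. By Corollary~\ref{cor:stochinversemult}, $\bm{U}$ has standard uniform margins, so it only remains to show that $\bm{U}$ has the stated density. It suffices to compute $\P(\bm{U} \in \mathcal{B})$ for Borel sets $\mathcal{B} \subseteq (0,1)^d$. Since each $\unionA_i$ has Lebesgue measure one and $U_i$ is uniform, we have $\bm{U} \in \unionA_1 \times \cdots \times \unionA_d$ almost surely. Hence
$$
\P(\bm{U}\in\mathcal{B}) = \sum_{\ell_1,\ldots,\ell_d} \P\bigl(\bm{U} \in \mathcal{B}\cap \mathcal{A}_{\bm{\ell}}\bigr),
\qquad \mathcal{A}_{\bm{\ell}} = A_{1\ell_1}\times\cdots\times A_{d\ell_d}.
$$

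Fix a cell $\mathcal{A}_{\bm{\ell}}$. On this cell, $\mathcal{T}$ restricts to $\mathcal{T}_{\bm{\ell}}(\bm{u}) = (T_1\vert_{A_{1\ell_1}}(u_1),\ldots,T_d\vert_{A_{d\ell_d}}(u_d))$. I would argue, following the facts cited from \cite{bib:mcneil-neslehova-smith-25}, that each $T_i\vert_{A_{i\ell}}$ is strictly monotone. Indeed, a regular udp function has $T_i'\neq 0$ almost everywhere on each piece, and this is needed anyway for the $p_{i\ell}$ in \eqref{eq:15} to make sense. Consequently $\mathcal{T}_{\bm{\ell}}$ is a $C^1$ bijection from $\mathcal{A}_{\bm{\ell}}$ onto $R_{1\ell_1}\times\cdots\times R_{d\ell_d}$, with inverse given componentwise by $T_i\vert_{A_{i\ell_i}}^{-1}$. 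On the event $\{\bm{U}\in\mathcal{A}_{\bm{\ell}}\}$ we have $\bm{U} = \mathcal{T}_{\bm{\ell}}^{-1}(\bm{V})$ almost surely, because $\mathcal{T}(\bm{U}) = \bm{V}$ a.s. by Corollary~\ref{cor:stochinversemult}. Conditioning on $\bm{V}$ then gives
$$
\P\bigl(\bm{U}\in\mathcal{B}\cap\mathcal{A}_{\bm{\ell}}\bigr) = \int \Ind{\mathcal{T}_{\bm{\ell}}^{-1}(\bm{v})\in\mathcal{B}}\, p_{\ell_1,\ldots,\ell_d}(\bm{v})\, c_{\bm{V}}(\bm{v})\,\rd\bm{v},
$$
where the integral runs over $R_{1\ell_1}\times\cdots\times R_{d\ell_d}$. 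Here $p_{\ell_1,\ldots,\ell_d}$ is the allocation probability described in Proposition~\ref{prop:multinomialp}, defined via a regular version of $F_{\bm{Z}\mid\bm{V}}$. For this step I would write $\P(\bm{U}\in \mathcal{B}\cap\mathcal{A}_{\bm{\ell}}) = \mathbb{E}[\Ind{\mathcal{T}_{\bm{\ell}}^{-1}(\bm{V})\in\mathcal{B}}\,\P(\bm{U}\in\mathcal{A}_{\bm{\ell}}\mid\bm{V})]$ and use the tower property.

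Next, substitute $\bm{v} = \mathcal{T}_{\bm{\ell}}(\bm{u})$. The Jacobian is diagonal, so its absolute determinant is $\prod_i |T_i'(u_i)|$. The integral becomes
$$
\int_{\mathcal{B}\cap\mathcal{A}_{\bm{\ell}}} c_{\bm{V}}(\mathcal{T}(\bm{u}))\, p_{\ell_1,\ldots,\ell_d}(\mathcal{T}(\bm{u}))\prod_{i=1}^d |T_i'(u_i)|\,\rd\bm{u}.
$$
Summing over all cells and inserting the indicators $\Ind{\bm{u}\in\mathcal{A}_{\bm{\ell}}}$ yields $\int_{\mathcal{B}} c_{\bm{V}}(\mathcal{T}(\bm{u}))\,\wtfunc(\bm{u})\,\rd\bm{u}$, with $\wtfunc$ exactly as in \eqref{eq:4}.

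The main obstacle is justifying the change of variables on each piece. If $T_i'$ may vanish at isolated points, or $T_i\vert_{A_{i\ell}}$ is only piecewise monotone, I would first refine the partition so that $T_i$ is strictly monotone with nonvanishing derivative on each subpiece, discarding a Lebesgue-null set. This refinement does not change $\wtfunc$: the allocation probabilities of the refined cells sum to those of the original cells, and the indicators combine accordingly. Alternatively, I would invoke the change-of-variables formula for $C^1$ injective maps on open sets, whose critical values form a null set by Sard's theorem. A secondary point is measurability, together with the choice of a version of $p_{\ell_1,\ldots,\ell_d}$; this is harmless, since the density is only determined almost everywhere. As a sanity check, integrating out $u_j$ for $j\neq i$ should recover uniform margins, consistent with Corollary~\ref{cor:stochinversemult}.
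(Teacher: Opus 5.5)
Your proof is correct. It shares the paper's skeleton: split according to the cell $A_{1\ell_1}\times\cdots\times A_{d\ell_d}$ containing $\bm{U}$, condition on $\bm{V}$, and substitute $v_i=T_i(x_i)$ on each cell. Where it differs is in how the event inside a cell is handled. The paper treats only rectangles $\prod_i(a_{i(\ell_i-1)},t_i)$ and rewrites $\{U_i<t_i\}$ as $\{Z_i\in(G_i(a_{i(\ell_i-1)},v_i),G_i(t_i,v_i))\}$. After the substitution it needs the jump identity $G_i(t_i,T_i(x_i))=G_i(a_{i(\ell_i-1)},T_i(x_i))+|T_i'(x_i)|^{-1}\Ind{x_i\le t_i}$ to reduce the conditional probability to $p_{\ell_1,\ldots,\ell_d}(\mathcal{T}(\bm{x}))$ times an indicator. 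It then assembles the distribution function from these rectangles.

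You instead use $\bm{U}=\mathcal{T}_{\bm{\ell}}^{-1}(\bm{V})$ on $\{\bm{U}\in\mathcal{A}_{\bm{\ell}}\}$. This makes $\{\bm{U}\in\mathcal{B}\}$ $\bm{V}$-measurable on that event, so only the cell probability survives the tower property. Your route never evaluates $G_i(\cdot,v)$ at interior points and covers all Borel sets at once. It also isolates what the density formula really uses, namely $\mathcal{T}(\bm{U})=\bm{V}$ a.s.\ and $c_{\bm{V}}$; the independence of the pairs $(V_i,Z_i)$ only serves to make the margins uniform. The paper's version, in return, keeps the link to the randomizer intervals $B_{i\ell}(v_i)$ of Proposition~\ref{prop:multinomialp} explicit.

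Your final paragraph is unnecessary. Each $T_i\vert_{A_{i\ell}}$ is strictly monotone with nonvanishing derivative; the paper uses this directly. It also follows because the weights $1/|T_i'(r)|$ sum to one, which, with continuity of $T_i'$ and the absence of flat pieces, forces $|T_i'|\ge 1$ on $A_{i\ell}$. So $\mathcal{T}_{\bm{\ell}}$ is a $C^1$ diffeomorphism, and no refinement or Sard argument is needed.

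Be aware, though, that refinement would not rescue a genuinely non-injective piece. There the coarse $p_{\ell_1,\ldots,\ell_d}$ would aggregate several branches. Your claim that refining leaves $\wtfunc$ unchanged therefore holds only when the pieces are already monotone.
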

  \begin{proof}
First, we show that for arbitrary indices $\ell_i \in \{1,\dots, L_i\}$, $i \in \{1,\dots, d\}$ and any $t_i \in (a_{i(\ell_i-1)},a_{i \ell_i}]$,
\begin{equation}\label{eq:lemma}
\P(U_i \in (a_{i (\ell_{i}-1)}, t_i), i \in \{1,\dots, d\}) = \int_{a_{1(\ell_1-1)}}^{t_1} \dots\int_{a_{d(\ell_d-1)}}^{t_d}c_{\bm{U}}(x_1,\dots, x_d) dx_1 \dots d x_d,
\end{equation}
where $c_{\bm{U}}$ has the form claimed in Theorem~\ref{theorem:mult-stoch-invers}. To this end, we first calculate that
  \begin{multline*}
    \P\left(U_i \in (a_{i(\ell_i-1)}, t_i), i=1,\ldots,d\right) 
    = \P\left(
  \udp_i^\leftarrow(V_i,Z_i) \in (a_{i(\ell_i-1)}, t_i),
                                                            i=1,\ldots,d
                                                            \right)
    \\
    = \int_{R_{1\ell_1}}\cdots\int_{R_{d\ell_d}}
      c_{\bm{V}}(\bm{v}) \P\left(G_{i}^{-1}(Z_i,v_i) \in
      (a_{i(\ell_i-1)}, t_i), i = 1,\ldots,d \mid
      \bm{V} = \bm{v} \right)        
      \rd \bm{v} \\
    = \int_{R_{1\ell_1}}\cdots\int_{R_{d\ell_d}}
      c_{\bm{V}}(\bm{v}) \P\left(Z_i\in \left(G_i(a_{i(\ell_i-1)},v_i),
      G_i(t_i, v_i)
      \right), i=1,\ldots,d\mid
      \bm{V} = \bm{v} \right)        
      \rd \bm{v}.
  \end{multline*}
  Because for each $i \in \{1,\dots, d\}$ and $\ell_i \in \{1,\dots,
 L_{i}\}$, the restriction $T_{i}\vert_{A_{i \ell_i}}$ is strictly monotonic and
 continuously differentiable on $A_{i \ell_i}$, we can make a change
 of variable $v_i \mapsto \udp_{i}(x_i)$ to obtain 
 \begin{multline*}
 \P\left(U_i \in (a_{i(\ell_i-1)}, t_i), i=1,\ldots,d\right) 
   =\\
     \int_{A_{1\ell_1}}\cdots\int_{A_{d\ell_d}}
      c_{\bm{V}}(\mathcal{T}(\bm{x})) \frac{\P\left(   Z_i\in
          \left(G_i(a_{i(\ell_i-1)}, T_i(x_i)),
      G_i(t_i, T_i(x_i))
      \right), i=1,\ldots,d    \mid
      \bm{V} = \mathcal{T}(\bm{x}) \right) }{   \prod_{i=1}^d \left| T_i^\prime(x_i)\right|^{-1}    }
      \rd \bm{x}.
 \end{multline*}
For points $\bm{x} = (x_1,\ldots,x_d)^\top$ and $\bm{t} =
(t_1,\ldots,t_d)^\top$ in $
A_{1\ell_1} \times \cdots \times A_{d \ell_d}$, it may be inferred
from~\eqref{eq:15} that 
$$
G_i(t_i, T_i(x_i)) = G_i(a_{i(\ell_i-1)},T_i(x_i)) + |T_i^\prime(x_i)|^{-1}\Ind{x_i
    \leq t_i}.
  $$
If $x_i > t_i$ for any $i \in
 \{1,\ldots,d\}$, then 
 $G_i(t_i, T_i(x_i)) = G_i(a_{i(\ell_i-1)}, T_i(x_i))$ and hence
 $$
 \P\left( Z_i\in \left(G_i(a_{i(\ell_i-1)}, T(x_i)),
      G_i(t_i, T_i(x_i))
      \right), i=1,\ldots,d  \mid
   \bm{V} = \mathcal{T}(\bm{x}) \right) =0.
 $$
Moreover $G_{i}(t_i,T_i(x_i))
    = G_i(a_{i \ell_i}, T_i(x_i)) $ for $x_i \leq t_i$.Using Proposition~\ref{prop:multinomialp} and the fact that all indicators in the summands on the right-hand side of \eqref{eq:4} vanish except one, 
 we obtain \eqref{eq:lemma} as claimed.
Now fix an arbitrary $\bm{u}=(u_1,\dots, u_d)^\top \in (0,1)^d$. Using the law of total probability, the fact that that the margins $U_i$ are uniformly distributed and that $\unionA_i$ has Lebesgue measure $1$,
 \[
 \P(U_1\le u_1,\dots, U_d \le u_d) = \sum_{\ell_1 =1}^{L_1} \dots \sum_{\ell_d=1}^{L_d} \P(U_i \in (0, u_i) \cap A_{\ell_i}, i \in \{1,\dots, d\}).
 \]
 The summands vanish unless all intersections $(0, u_i) \cap A_{\ell_i}$ are non-empty, that is $u_i > a_{i(\ell_i-1)}$ for all $i \in \{1,\dots, d\}$. In the latter case,  $(0, u_i) \cap A_{\ell_i} = (a_{i(\ell_i-1)}, \min(u_i,a_{i \ell_i}))$. Applying identity \eqref{eq:lemma} gives
 \[ 
 \P(U_i \in (0, u_i) \cap A_{\ell_i}, i \in \{1,\dots, d\}) = \int_{a_{1(\ell_1-1)}}^{\min(u_1,a_{1 \ell_1})}\dots \int_{a_{d(\ell_d-1)}}^{\min(u_d,a_{d \ell_d})} c_{\bm{U}}(x_1,\dots, x_d) dx_1 \dots d x_d.
 \]
Gathering terms then shows that $c_{\bm{U}}$ is indeed the Lebesgue density of $\bm{U}$, as claimed.
\end{proof}

When $Z_1,\ldots,Z_d$ are iid uniform
variables, independent of $\bm{V}$, we
have, for $\bm{u} \in A_{1\ell_1}\times \cdots \times A_{d\ell_d}$,
\begin{align*}
 p_{\ell_1,\ldots,\ell_d}(\mathcal{T}(\bm{u})) &=
                                                               \prod_{i=1}^d
                                                               \P\left(Z_i
                                                 \in
                                                 B_{i\ell_i}\left(T_i(u_i)\right)\right)
 =\prod_{i=1}^dp_{i\ell_i}(T_i(u_i)) = \prod_{i=1}^d
\left|T_i^\prime(u_i)\right|^{-1}                                               
\end{align*}
implying that $\wtfunc(u_1,\ldots,u_d) = 1$ and  $c_{\bm{U}}(u_1,\ldots,u_d) =
c_{\bm{V}}(T_1(u_1),\ldots,T_d(u_d))$, which is the result for independent stochastic inversion
in~\cite{bib:mcneil-neslehova-smith-25}.
We can also recover results in~\cite{bib:bladt-mcneil-21} for the
case where each of the udp transformations $T_i$ is a
v-transform.

The function $\wtfunc(u_1,\ldots,u_d)$ in~\eqref{eq:4} is key to the general
result. We now show that it is always a probability density function
and, under a stronger assumption, the density of a copula.

\begin{theorem}\label{prop:densityresult}
The function $\wtfunc(u_1,\ldots,u_d)$ in~\eqref{eq:4} is a probability density
function. If $Z_i$ is independent of $\bm{V}$ for $i=1,\ldots,d$, it
is the density of a copula.
\end{theorem}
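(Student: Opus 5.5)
Nonnegativity of $\wtfunc$ is immediate from \eqref{eq:4}, since the allocation probabilities and the factors $|T_i^\prime(u_i)|$ are nonnegative. So the first claim reduces to showing $\int_{[0,1]^d}\wtfunc(\bm{u})\,\rd\bm{u}=1$. The boundaries $\{u_i=a_{i\ell}\}$ form a null set, so we can integrate cell by cell over the boxes $A_{1\ell_1}\times\cdots\times A_{d\ell_d}$. On each box only one indicator in \eqref{eq:4} is nonzero. Making the change of variables $v_i=T_i(u_i)$ in every coordinate, exactly as in the proof of Theorem~\ref{theorem:mult-stoch-invers}, the Jacobian factor $\prod_i|T_i^\prime(u_i)|$ cancels and we get
\begin{equation*}
\int_{A_{1\ell_1}\times\cdots\times A_{d\ell_d}}\wtfunc(\bm{u})\,\rd\bm{u}=\int_{R_{1\ell_1}\times\cdots\times R_{d\ell_d}}p_{\ell_1,\ldots,\ell_d}(\bm{v})\,\rd\bm{v}.
\end{equation*}
By Proposition~\ref{prop:multinomialp}, $p_{\ell_1,\ldots,\ell_d}$ vanishes outside $R_{1\ell_1}\times\cdots\times R_{d\ell_d}$, so we may integrate over $(0,1)^d$ instead. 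Summing over all cells then gives $\int_{(0,1)^d}\sum_{\ell_1,\ldots,\ell_d}p_{\ell_1,\ldots,\ell_d}(\bm{v})\,\rd\bm{v}$. For almost every $\bm{v}$ (those in $T_1(\unionA_1)\times\cdots\times T_d(\unionA_d)$, a set of full measure) the cell probabilities sum to one, because $\bm{U}$ falls into exactly one cell almost surely. Hence the integral is $1$.

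For the copula claim, the plan is to compute the univariate margins of the density $\wtfunc$. Fix $i$ and $u_i\in A_{i\ell_i}$, and integrate $\wtfunc$ over all coordinates $u_k$ with $k\neq i$. Cell by cell, change variables $v_k=T_k(u_k)$ for $k\ne i$ and keep $u_i$ fixed. Summing over $\ell_k$ for $k\ne i$ and using that the $d$-variate multinomial table has marginals $p_{i\ell_i}$, the result is
\begin{equation*}
|T_i^\prime(u_i)|\int_{(0,1)^{d-1}}\P\bigl(Z_i\in B_{i\ell_i}(T_i(u_i))\mid \bm{V}=(\bm{v}_{-i},T_i(u_i))\bigr)\,\rd\bm{v}_{-i}.
\end{equation*}
This is the step where the extra independence hypothesis must enter, and I expect it to be the main obstacle. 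Without it, the conditional probability depends on $\bm{v}_{-i}$ in a way that is integrated against Lebesgue measure rather than against $c_{\bm{V}}$, so it need not equal $p_{i\ell_i}(T_i(u_i))$. If $Z_i$ is independent of $\bm{V}$, however, the conditional probability equals $\P(Z_i\in B_{i\ell_i}(T_i(u_i)))=p_{i\ell_i}(T_i(u_i))$. This is constant in $\bm{v}_{-i}$, so the integral over $(0,1)^{d-1}$ contributes a factor $1$. By the identity following \eqref{eq:15}, $p_{i\ell_i}(T_i(u_i))=|T_i^\prime(u_i)|^{-1}$, so the marginal density equals $1$ for almost every $u_i$. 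Every margin of $\wtfunc$ is therefore uniform, and $\wtfunc$ is a copula density.

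Some care is needed to justify Fubini/Tonelli when swapping the sum over cells, the integral, and the conditional probabilities. This is routine because all integrands are nonnegative. One should also choose versions of the conditional distribution $F_{\bm{Z}\mid\bm{V}}$ so that $p_{\ell_1,\ldots,\ell_d}$ is measurable and sums to one pointwise on a full-measure set; I would state this as a standing convention.
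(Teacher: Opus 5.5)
Your proposal is correct and follows the paper's proof essentially step for step. For the first claim, cellwise substitution $v_i=T_i(u_i)$ reduces $\int\wtfunc$ to $\int\sum_{\ell_1,\ldots,\ell_d}p_{\ell_1,\ldots,\ell_d}(\bm{v})\,\rd\bm{v}=1$; for the margins, summing the cell probabilities over $\ell_k$, $k\neq i$, gives $\P\bigl(Z_i\in B_{i\ell_i}(T_i(u_i))\mid\bm{V}=\bm{v}\bigr)$, which independence of $Z_i$ and $\bm{V}$ turns into $|T_i^\prime(u_i)|^{-1}$.

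The only slip is the phrase that the table ``has marginals $p_{i\ell_i}$''. In general those margins are conditional probabilities given all of $\bm{V}$, as your displayed formula and your next sentence correctly reflect, and they reduce to $p_{i\ell_i}(v_i)$ only under the extra independence hypothesis.
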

\begin{proof} Exchanging the order of integration and summation
  allows us to write 
  \begin{displaymath}
  \int_0^1 \cdots \int_0^1\wtfunc(\bm{u})\rd\bm{u} = \sum_{\ell_1=1}^{L_1}
                                                         \cdots
                                                         \sum_{\ell_d=1}^{L_d}
                                                         \int_{A_{1\ell_1}}
                                                         \cdots
                                                         \int_{A_{d\ell_d}}
                                                        p_{\ell_1,\ldots,\ell_d}(\mathcal{T}(\bm{u}))
                                                        \prod_{i=1}^d
                                                         |T_i^\prime(u_i)|
                                                       \rd \bm{u}.
  \end{displaymath}
  Since
  $T_i\vert_{A_{i\ell_i}}$ is strictly monotonic and continuously
  differentiable, we can make
  the substitution $v_i = T_i(u_i)$ on $A_{i\ell_i}$ which gives
\begin{align*}
  \int_0^1 \cdots \int_0^1\wtfunc(\bm{u})\rd\bm{u}  &=
        \sum_{\ell_1=1}^{L_1}
            \cdots
                                                         \sum_{\ell_d=1}^{L_d}
                                                         \int_{R_{1\ell_1}}
                                                         \cdots
                                                         \int_{R_{d\ell_d}} p_{\ell_1,\ldots,\ell_d}(\bm{v})
                                                       \rd \bm{v} =
        \sum_{\ell_1=1}^{L_1}
            \cdots
                                                         \sum_{\ell_d=1}^{L_d}
                                                         \int_{0}^1
                                                         \cdots
                                                         \int_{0}^1p_{\ell_1,\ldots,\ell_d}(\bm{v})
                                                       \rd \bm{v},
\end{align*}
where the second expression follows because $p_{\ell_1,\ldots,\ell_d}(\bm{v})=0$ for $\bm{v}
     \not\in R_{1\ell_1} \times \cdots \times R_{d\ell_d}$. Finally,
   exchanging the order of summation and integration again gives the
   result, since the probabilities
$p_{\ell_1,\ldots,\ell_d}(\bm{v})$  sum to one over all possible
cells, except on a null set in $[0,1]^d$.

Now we assume that $Z_i$ is independent of $\bm{V}$ for $i=1,\ldots,d$
and show, without loss of generality, that $\wtfunc_1(u_1) := \int_{u_2=0}^1 \cdots
\int_{u_d=0}^1\wtfunc(\bm{u})\rd\bm{u} =1$ for all $u_1 \in \unionA_1$. Using a similar sequence
of operations to the ones above we can show that
 \begin{align*}
 \wtfunc_1(u_1) &= \sum_{\ell_1=1}^{L_1} \Ind{u_1 \in A_{1\ell_1}}\sum_{\ell_2=1}^{L_2}
                                                         \cdots
                                                         \sum_{\ell_d=1}^{L_d}
                                                         \int_{A_{2\ell_2}}
                                                         \cdots
                                                         \int_{A_{d\ell_d}}
                                                        p_{\ell_1,\ldots,\ell_d}(\mathcal{T}(\bm{u}))
                                                         \prod_{i=1}^d
                                                         |T_i^\prime(u_i)|
                                                       \rd u_2 \cdots
                  \rd u_d \\
   &=\sum_{\ell_1=1}^{L_1} \Ind{u_1 \in A_{1\ell_1}} \sum_{\ell_2=1}^{L_2}
                                                         \cdots
                                                         \sum_{\ell_d=1}^{L_d}
                                                         \int_{0}^1
                                                         \cdots
                                                         \int_{0}^1
                                                         \frac{p_{\ell_1,\ldots,\ell_d}\left(T_1(u_1),v_2,\ldots,v_d
     \right)
                                                         }{
                                                         |T_1^\prime(u_1)|^{-1}}
                                                       \rd v_2 \cdots
     \rd v_d \\
   &= \sum_{\ell_1=1}^{L_1} \Ind{u_1 \in A_{1\ell_1}}  |T_1^\prime(u_1)|   \int_{0}^1
                                                         \cdots
                                                         \int_{0}^1 \sum_{\ell_2=1}^{L_2}
                                                         \cdots
                                                         \sum_{\ell_d=1}^{L_d} p_{\ell_1,\ldots,\ell_d}\left(T_1(u_1),v_2,\ldots,v_d
     \right)    \rd v_2 \cdots
     \rd v_d.
 \end{align*}
 Now the integrand may be written as
 \begin{displaymath}
 \sum_{\ell_2=1}^{L_2}
                                                         \cdots
                                                         \sum_{\ell_d=1}^{L_d} p_{\ell_1,\ldots,\ell_d}\left(T_1(u_1),v_2,\ldots,v_d
     \right) =  \P\left(  Z_1 \in B_{1\ell_1}(T_1(u_1))  \mid \bm{V} = (T_1(u_1),v_2,\ldots,v_d)^\top\right)
   \end{displaymath}
   and is simply
   equal to $|T_1^\prime(u_1)|^{-1}$ by the independence of $Z_1$ and $\bm{V}$.
   The assertion now follows easily.
    \end{proof}

\begin{remark}
The fact that multivariate stochastic inversion depends on the
conditional randomizer distribution $F_{\bm{Z}\mid \bm{V}}$ through
the multinomial probabilities $p_{\ell_1,\ldots,\ell_d}(\bm{v})$
means that it is
  possible to construct different randomizations for regular non-monotonic udp transformations $T_1,\ldots,T_d$
  that give
  rise to the same multivariate distributions. If $\bm{Z}=(Z_1,\ldots,Z_d)^\top$
and $ \bm{W}=(W_1,\ldots, W_d)^\top$ are two vectors of uniform randomizers such that $(V_i,Z_i)$ and $(V_i,
W_i)$ are independent for $i\in\{1,\ldots,d\}$, then
$(T_1^\leftarrow(V_1,Z_1), \ldots, T_d^\leftarrow(V_d,Z_d))$ has the same distribution as
$(T_1^\leftarrow(V_1,W_1), \ldots,
T_d^\leftarrow(V_d,W_d))$,
provided that, for almost all $\bm{v} \in (0,1)^d$,
$F_{\bm{Z}\mid \bm{V}}(\bm{x} \mid \bm{v}) =F_{\bm{W}\mid
  \bm{V}}(\bm{x} \mid \bm{v})$ for all values $\bm{x} \in (0,1]^d$ such that
$x_i \in \{b_{i \ell}(v_i): \ell \in \{1,\ldots,L_i\}\}$ for
$i=1,\ldots,d$. This ensures that the probabilities $p_{\ell_1,\dots,\ell_d}$ coincide.
\end{remark}

\section{The bivariate case}\label{sec:bivariate-examples}

We illustrate the results of the previous section with a number
  of examples in the case where $d=2$.
\subsection{General model}
For clarity, we use long-form expressions such as $c_{U_1,U_2}$
and $C_{U_1,U_2}$ for densities and distribution functions in the
bivariate case.
The weight function $\wtfunc$ takes the form
\begin{displaymath}
    \wtfunc(u_1,u_2) =
  \sum_{\ell_1 =1}^{L_1}\sum_{\ell_2 =1}^{L_2}\Ind{u_1 \in
    A_{1\ell_1},u_2 \in A_{2\ell_2}} 
    p_{\ell_1,\ell_2}\left(T_1(u_1), T_2(u_2)\right) \left|
      T_1^\prime(u_1)\right|   \left| T_2^\prime(u_2)\right|
\end{displaymath}
where we may write
\begin{multline*}
    p_{\ell_1,\ell_2}(v_1,v_2) = F_{Z_1,Z_2\mid
      V_1,V_2}\left(b_{1\ell_1},(v_1), b_{2\ell_2}(v_2) \mid v_1, v_2\right)
  -F_{Z_1,Z_2\mid V_1,V_2}\left(b_{1(\ell_1-1)}( v_1),
    b_{2\ell_2}(v_2) \mid v_1, v_2\right) \\
  -F_{Z_1,Z_2\mid V_1,V_2}\left(b_{1\ell_1},(v_1),
    b_{2(\ell_2-1)}(v_2) \mid v_1,v_2\right)+
  F_{Z_1,Z_2\mid V_1,V_2}\left(b_{1(\ell_1-1)}( v_1),
    b_{2(\ell_2-1)}(v_2) \mid v_1,v_2\right)
\end{multline*}
for the conditional cell probabilities.
The joint conditional distribution $F_{Z_1,Z_1\mid V_2,V_2}$ of the randomizers may be written
in terms of its copula $C_{Z_1,Z_2 \mid
     V_1, V_2}$ and margins $F_{Z_i\mid V_1, V_2}$ as
 \begin{equation}\label{eq:9}
   F_{Z_1,Z_2 \mid V_1, V_2}(z_1,z_2 \mid v_1, v_2) = C_{Z_1,Z_2 \mid
     V_1, V_2}\left(F_{Z_1\mid V_1, V_2}(z_1 \mid v_1, v_2), F_{Z_2
       \mid V_1, V_2}(z_2 \mid v_1, v_2) \mid v_1, v_2\right).
 \end{equation}
 Using the standard notation $h^{(i)}(u_1,u_2) =
 \frac{\partial}{\partial x_i}C(u_1,u_2)$ for partial derivatives
 of a copula, the margins are
 \begin{align*}
   F_{Z_1\mid
   V_1, V_2}(z_1
   \mid v_1,
   v_2) &= h^{(2)}_{Z_1,
          V_2 \mid
          V_1}\left(z_1, h^{(1)}_{V_1,V_2}(v_1,v_2) 
     \mid
          v_1\right)
   \\
   F_{Z_2\mid
   V_1, V_2}(z_2
   \mid v_1,
   v_2) 
        &= h^{(2)}_{Z_2,
          V_1 \mid
          V_2}\left(z_2,  h^{(2)}_{V_1,V_2}(v_1,v_2) 
       \mid
          v_2\right).
 \end{align*}
Thus, in general,
calculation of $\wtfunc(u_1,u_2)$ requires the copula $C_{Z_1,Z_2 \mid
     V_1, V_2}$
and the $h$-functions 
 of the
 copulas $C_{V_1,V_2}$, $C_{Z_1,V_2\mid V_1}$ and $C_{Z_2,V_1
   \mid V_2}$.
 It may be noted that the densities of these four copulas are the ones that appear in a 
D-vine copula decomposition of the joint density $c_{V_1,Z_1,Z_2,V_2}$
of $(V_1,Z_1,Z_1,V_2)$.

 \subsection{Example with v-transforms}

We derive the form of the densities $\wtfunc(u_1,u_2)$ and
$c_{U_1,U_2}(u_1,u_2)$ in a number of interesting cases.  To
illustrate the differences, we make use of an example
from~\cite{bib:mcneil-neslehova-smith-25}
in which $T_1(u) =
T_2(u) =|2u-1|$, the symmetric v-transform, and $C_{V_1,V_2}=
C^{\text{Ga}}_{0.85}$, the Gaussian copula with parameter $\rho
=0.85$.
In this case, for $i\in
\{1,2\}$, we have $L_i=2$, $A_{i1} = (0,0.5)$, $A_{i2} = (0.5,1)$ and
$|T_i^\prime(u)| = 2$ for $u \in A_{i1} \cup A_{i2}$. Moreover for $v
\in (0,1)$
we have $b_{i0}(v) =0$, $b_{i1}(v) =0.5$ and $b_{i2} = 1$.

\subsubsection{Independent stochastic inversion}\label{ex:one}
When $(Z_1,Z_2)
\indep (V_1,V_2)$ and $Z_1 \indep Z_2$ we have $\wtfunc(u_1,u_1) = 1$
and  $c_{U_1,U_2}(u_1,u_2) =
c^{\text{Ga}}_{0.85}(|2u_1-1|,|2u_2-1|)$. The contour plot of this
density is shown in the first picture in Figure~\ref{fig:sipictures}.

\subsubsection{$(Z_1,Z_2)$ independent of $(V_1,V_2)$ but $(Z_1,Z_2)$ dependent}\label{ex:two}
In this case $F_{Z_1,Z_2\mid V_1,V_2} = C_{Z_1,Z_2}$. As an extreme example,
suppose that
$C_{Z_1,Z_2}(z_1,z_2) = M(z_1,z_2) = \min(z_1,z_2)$, the
comonotonicity copula. To describe the function $\wtfunc(u_1,u_2)$
it helps to introduce terminology for
the open quadrants of the unit square: $Q_1 = A_{11} \times A_{21} =
(0,0.5)^2$, $Q_2 = A_{11} \times A_{22} = (0,0.5) \times (0.5,1)$, $Q_3
= A_{12} \times A_{21} = (0.5,1) \times (0,0.5)$ and $Q_4 = A_{12}
\times A_{22} = (0.5,1)^2$. It is straightforward to verify that
$\wtfunc(u_1,u_2) = 2 \Ind{(u_1,u_2) \in Q_1
    \cup Q_4}$
and to note that this is the density of a patchwork copula on the unit
square. The resulting copula density is $c_{U_1,U_2}(u_1,u_2) =
2c^{\text{Ga}}_{0.85}(|2u_1-1|,|2u_2-1|) \Ind{(u_1,u_2) \in Q_1
    \cup Q_4}$ and is illustrated in the
contour plot in the second picture in Figure~\ref{fig:sipictures}.


\begin{figure}[!htb]
  \centering
  \includegraphics[width=4cm,height=4.5cm]{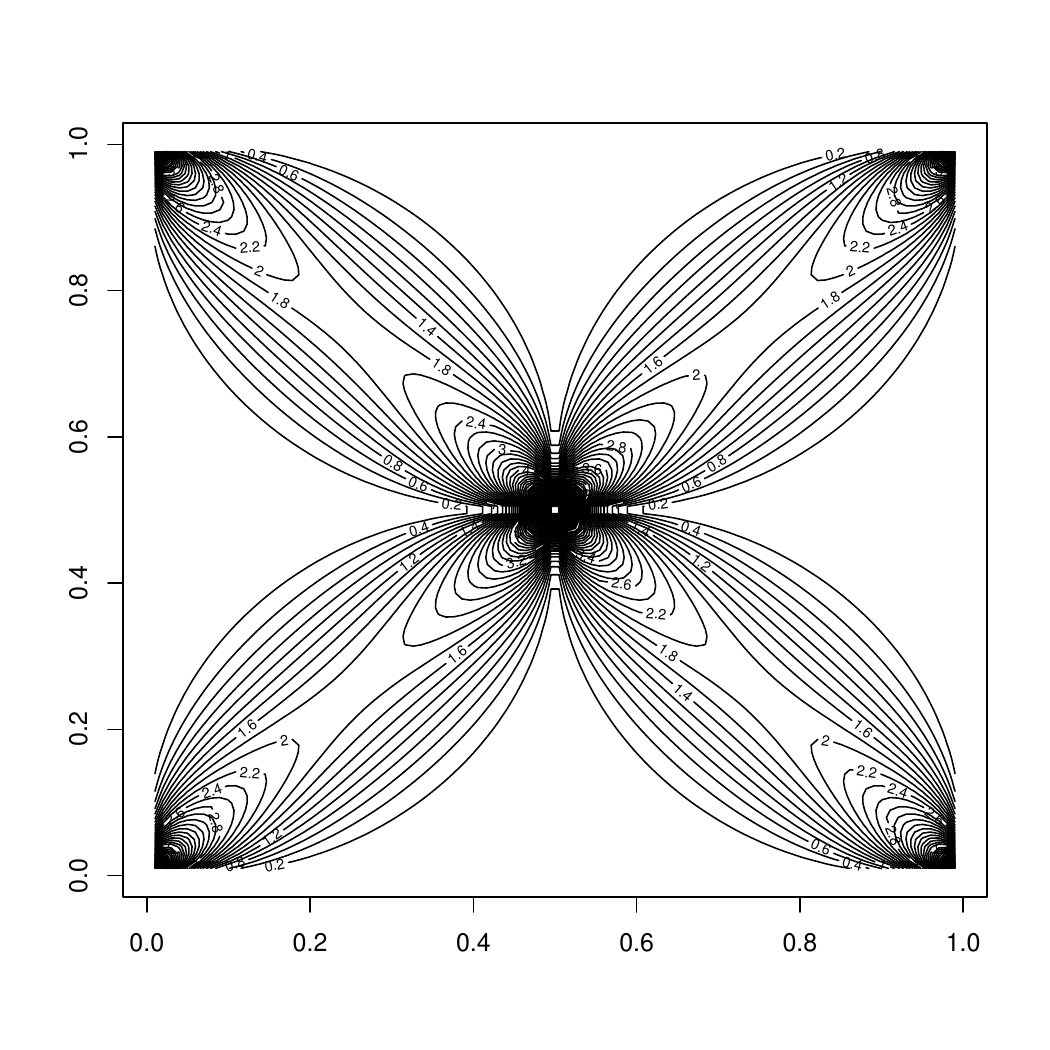}
  \includegraphics[width=4cm,height=4.5cm]{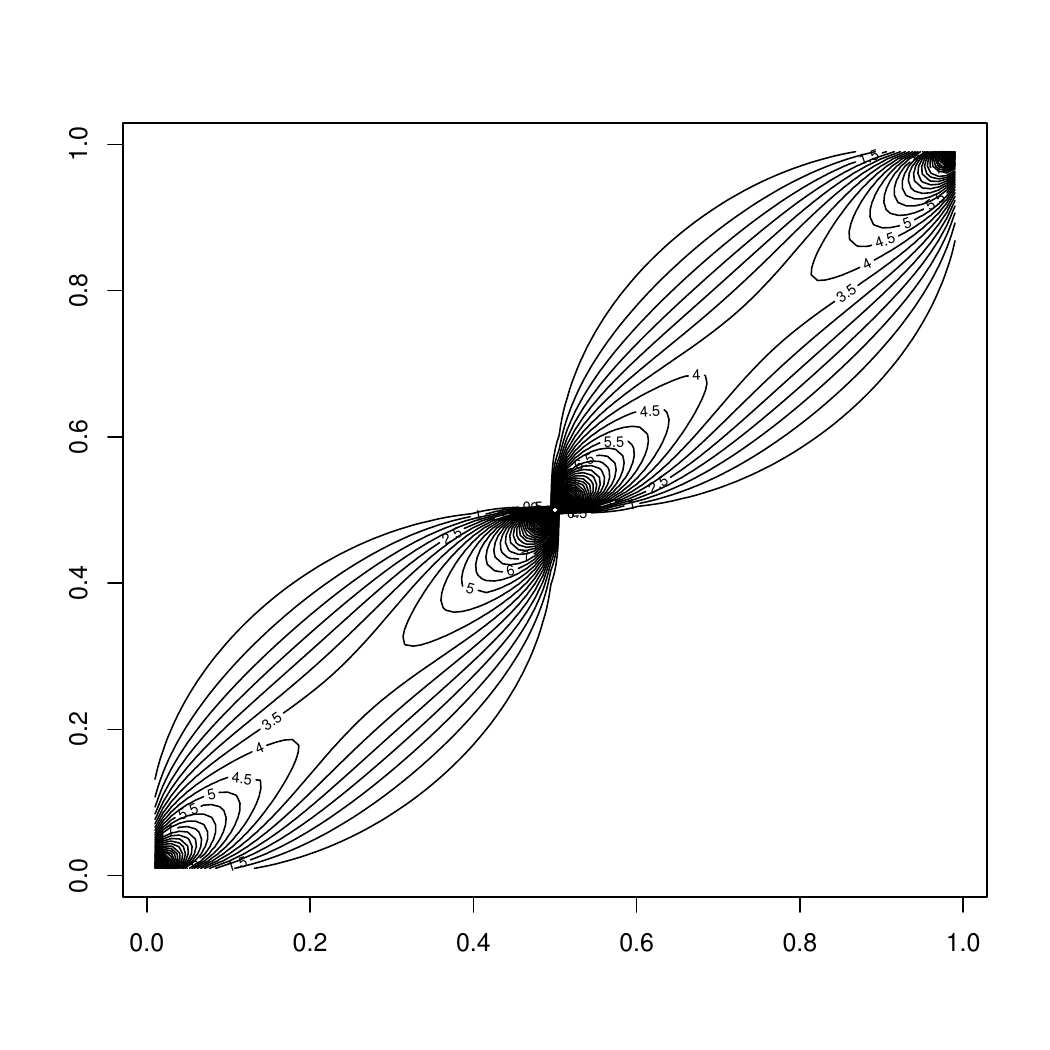}
  \includegraphics[width=4cm,height=4.5cm]{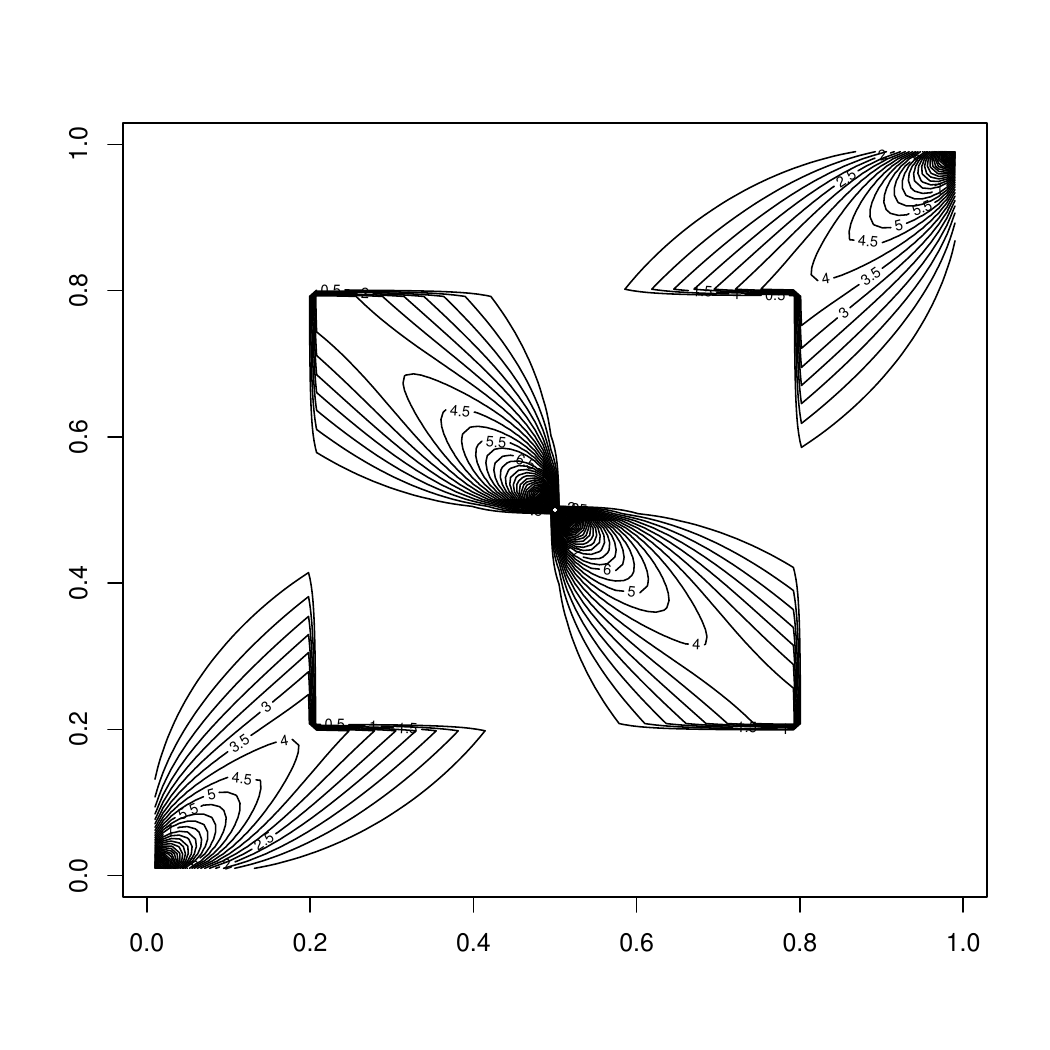}
  \includegraphics[width=4cm,height=4.5cm]{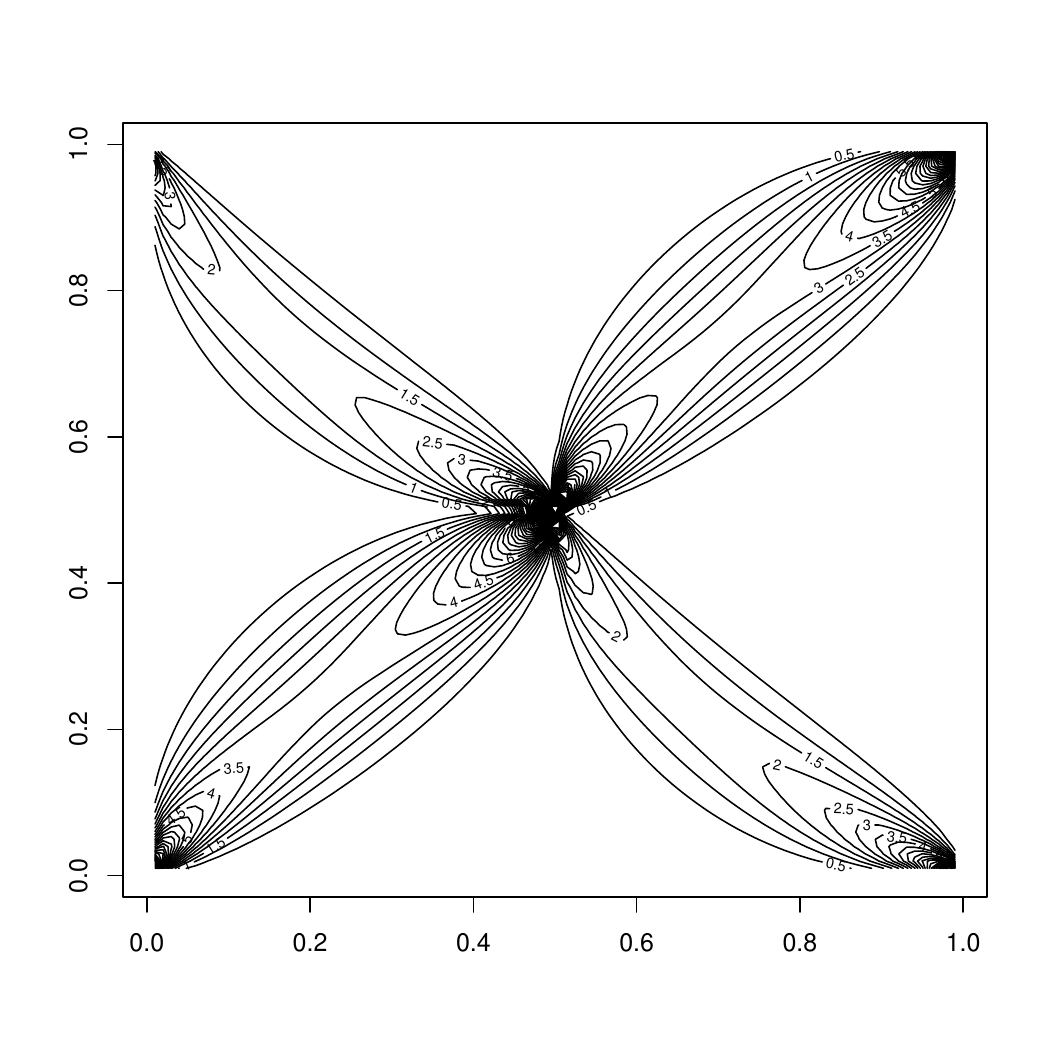}
  \vspace{-0.5cm}
  \caption{Contour plots of the bivariate copula densities in
    Sections~\ref{ex:one}--\ref{ex:four}. Componentwise transformation of a random vector
    $(U_1,U_2)$ distributed according to these densities with the
    udp transformation $T(u) = |2u-1|$ results in a random
    vector $(V_1,V_2)$ distributed according to the
    Gaussian copula $C^{\text{Ga}}_{0.85}$. \label{fig:sipictures}}
\end{figure}

 \subsubsection{$Z_1$ independent of $(V_1,V_2)$ and
   $Z_2$ independent of $(V_1,V_2)$}\label{ex:three}
 Although we set $F_{Z_i\mid V_1, V_2}(z \mid v_1, v_2) = F_{Z_i}(z) =
 z$ for $i\in \{1,2\}$, the copula of the
 conditional joint df of
 $(Z_1,Z_2)$ given  $(V_1,V_2)$
 may still depend on the latter so that
 $F_{Z_1,Z_2\mid V_1,V_2} = C_{Z_1,Z_2\mid V_1,V_2}$. For example, let
$C_{Z_1,Z_2\mid V_1,V_2}(z_1,z_2\mid v_1, v_2) =
  M(z_1,z_2) \Ind{\max(v_1,v_2) > k} + W(z_1,z_2) \Ind{\max(v_1,v_2)
    \leq k}$ for $k \in (0,1)$ where $M$ is the comonotonicity copula
  and $W$ is the countermonotonicity copula. If we introduce
  the notation
  $S_k = \{(u_1,u_2) : \max(T_1(u_1),T_2(u_2)) \leq k\} = [1/2
  -k/2, 1/2 + k/2]^2$  for a square with side length $k$ and
  $S_k^\complement = (0,1)^2\setminus S_k$ for its complement, we
  obtain that  $\wtfunc(u_1,u_2) = 2 \Ind{(u_1,u_2) \in (Q_1
   \cup Q_4) \cap S_k^\complement} + 2 \Ind{(u_1,u_2) \in (Q_2
   \cup Q_3) \cap S_k}$ 
which is itself a copula density.
The contour plot of the density $c_{U_1,U_2}$ is shown in the third picture
in Figure~\ref{fig:sipictures} for $k = 0.6$ and part of the outline of the
square $S_k$ can clearly be discerned.


 \subsubsection{Simplified D-vine}\label{ex:four}

 As is standard with pair copula decompositions, we can
 simplify the general model~\eqref{eq:9} by imposing the so-called
 simplifying assumption whereby the copulas have
   no explicit dependence on the realized values of the conditioning
   variables. In other words,
 $C_{Z_1,Z_2 \mid
     V_1, V_2}(z_1,z_2 \mid v_1,v_2) = C_{Z_1,Z_2 \mid
     V_1, V_2}(z_1,z_2)$, $C_{Z_1,V_2\mid V_1}(z_1,v_2 \mid v_1) =
   C_{Z_1,V_2\mid V_1}(z_1,v_2)$
   and $C_{Z_2,V_1\mid V_2}(z_2,v_1 \mid v_2) =
   C_{Z_2,V_1\mid V_2}(z_2,v_1)$.

   For example, we choose $C_{Z_1,V_2 \mid V_1} =
C^{\text{Ga}}_{0.7}$, $C_{Z_2,V_1 \mid V_2} = C^{\text{Ga}}_{0.1}$ and
$C_{Z_1,Z_2 \mid V_1, V_2} = C^{\text{Ga}}_{0.8}$. The contour plot of the
resulting copula density $c_{U_1,U_2}$ is shown in the fourth picture in
Figure~\ref{fig:sipictures}. In this example the function $\wtfunc(u_1,u_2)$ takes a very unusual form as
shown in the perspective plot in Figure~\ref{fig:oddhfunc}. In
contrast to previous examples, this is not a
copula density although it is a probability density on the unit
square. We can apply the
arguments in the proof of Theorem~\ref{prop:densityresult} to see that the first marginal density satisfies
\begin{align*}
  \wtfunc_1(u) &= 2 \sum_{\ell=1}^2 \Ind{u \in A_{1\ell}} \int_0^1
                 \P\big( Z_1 \in B_{1\ell}\left(\lvert 1-2u \rvert\right) \mid V_1 =
                \lvert 1-2u \rvert, V_2 = v \big) \rd v, \quad u \in
(0,1)\setminus \{0.5\},
\end{align*}
which depends on the copulas $C_{Z_1,V_2 \mid V_1}$ and
$C_{V_1,V_2}$ through their h-functions. The density $\wtfunc_1(u)$ is shown in
Figure~\ref{fig:oddhfunc} and is clearly not uniform.



\begin{figure}[htb]
  \centering
  \includegraphics[width=10cm, height=5cm]{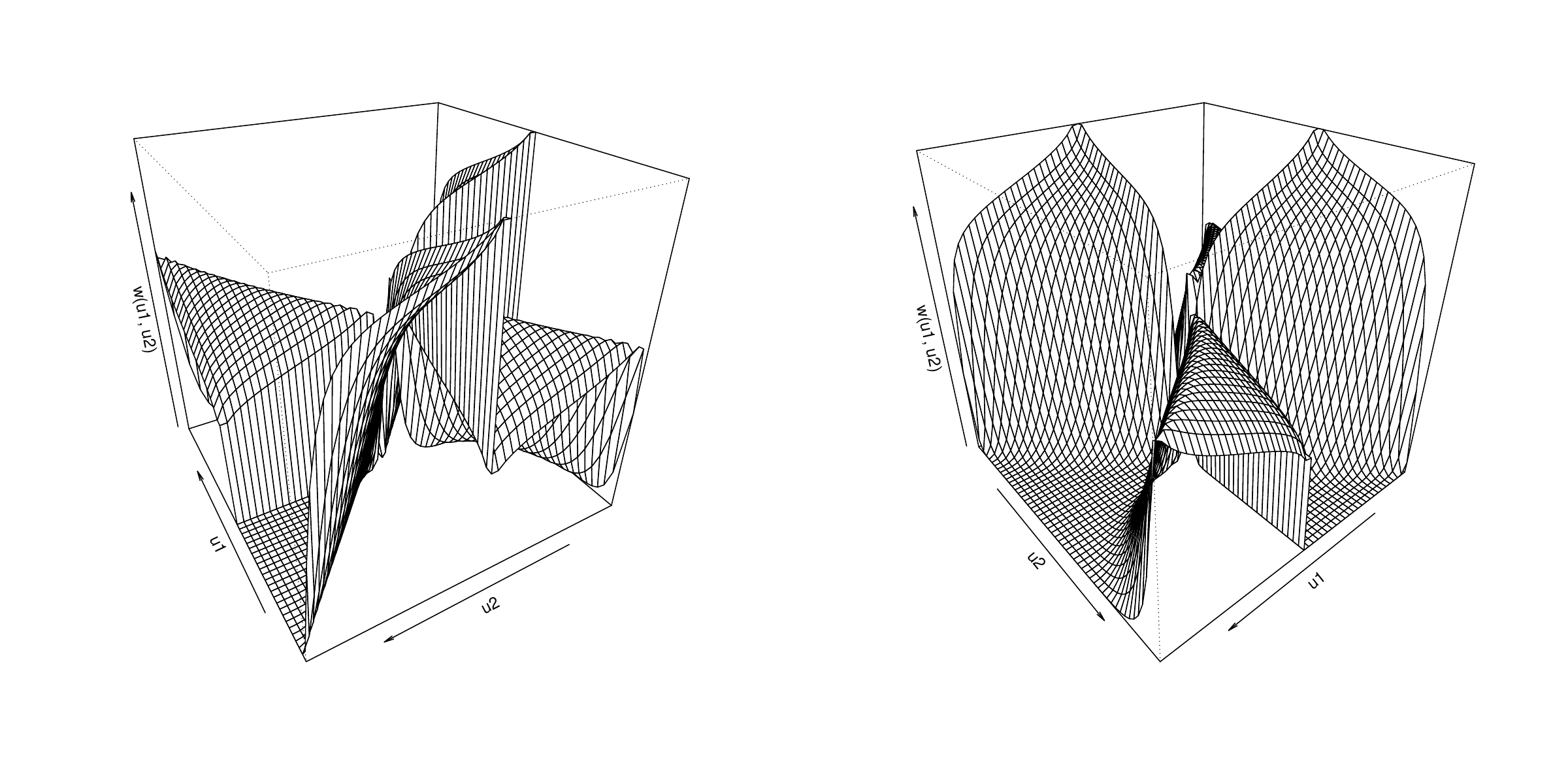}
    \includegraphics[width=5cm, height=5cm]{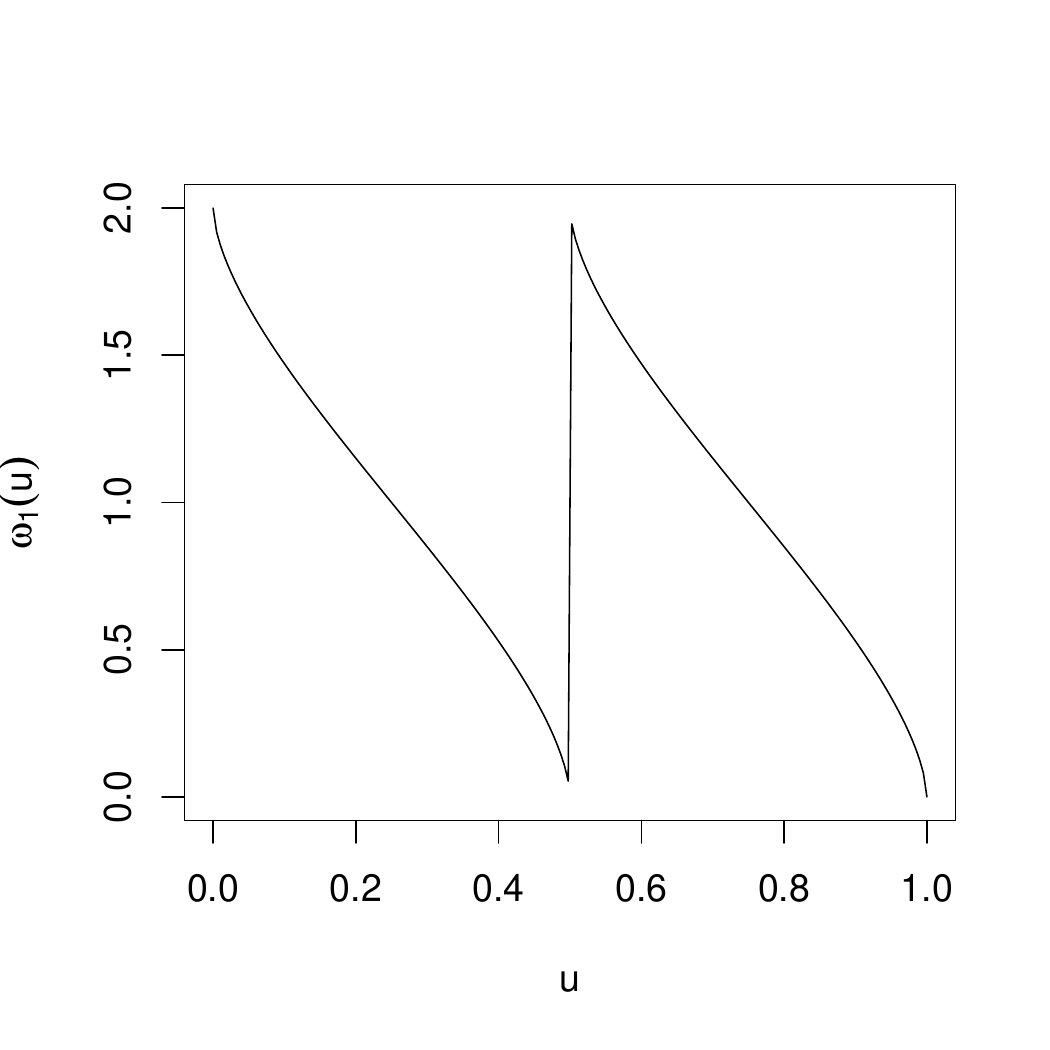}
  \vspace{-0.2cm}
  \caption{Perspective plots of $\wtfunc(u_1,u_2)$ in
    Section~\ref{ex:four} from two different angles and plot of
    corresponding marginal density
    $\omega_1(u)$.}
  \label{fig:oddhfunc}
\end{figure}

\section{Applications}\label{sec:applications}

While it is always possible to apply non-parametric density estimation
to pseudo-copula data showing a complex non-monotonic dependency,
there can be interpretational and computational advantages to a
structured parametric model. Our main result in Theorem~\ref{theorem:mult-stoch-invers} is very general, but does
imply some models that are potentially useful for real-world applications. For example, the cross-shaped
copulas in the first and fourth plot of
Figure~\ref{fig:sipictures} have applications to modelling
serial dependence of financial asset price returns~\citep{bib:dias-han-mcneil-25}. The
simplified D-vine construction used in Section~\ref{ex:four} may be a
particularly useful model since it subsumes the special
case of independent stochastic inversion but offers
extra flexibility.

In~\cite{bib:mcneil-neslehova-smith-25} a two-stage approach to
fitting parametric copula models with densities of the form $c_{U_1,U_2}(u_1,u_2) =
    c_{V_1,V_2}(T_1(u_1),T_2(u_2))$ to pseudo-copula data $\{(u_{i1},u_{i2}),i=1,\ldots,n\}$ is proposed. 
 In the first stage the functions $T_1$ and $T_2$ are elicited
 by an approach that 
 seeks to maximize a sample analogue of the Spearman
 correlation $\rho(T_1 (U_1), T_2(U_2))$ with respect to a class of
 udp functions related to weighted sums of Legendre polynomials.
  In the second stage the data $\{(v_{i1},v_{i2}) = (T_1(u_{i1}),
  T_2(u_{i2})), i=1,\ldots,n\}$ are used to estimate a copula
  $C_{V_1,V_2}$ modelling positive dependence.

  In the more general simplified D-vine construction developed in
  Section~\ref{ex:four}, a third stage could be added to complete the
  estimation of a model of the form $c_{U_1,U_2}(u_1,u_2) =
    c_{V_1,V_2}(T_1(u_1),T_2(u_2)) \wtfunc(u_1,u_2)$. Having
    determined $T_1$, $T_2$ and the copula $C_{V_1,V_2}$, it would remain to determine the copulas
    $C_{Z_1,V_2\mid V_1}$, $C_{Z_2,V_1
   \mid V_2}$ and $C_{Z_1,Z_2 \mid
     V_1, V_2}$ governing the conditional distribution of the
   randomizers $F_{Z_1,Z_2\mid V_1,V_2}$. By choosing comprehensive copula families describing monotonic
   dependencies, such as the Gaussian family, we obtain a
   flexible model class leading to a likelihood that may be maximized
   with respect to the parameters of  the copulas $C_{Z_1,V_2\mid V_1}$, $C_{Z_2,V_1
   \mid V_2}$ and $C_{Z_1,Z_2 \mid
     V_1, V_2}$. The practical development of a methodology of this kind, as
  well as its extension to the general case where $d >2$, is a subject for
  further research.





 \newcommand{\noopsort}[1]{}


\end{document}